\documentclass[reqno]{amsart}

\usepackage[foot]{amsaddr}

\makeatletter
\@namedef{subjclassname@2020}{\textup{2020} Mathematics Subject Classification}
\makeatother

\usepackage{amssymb} 
\usepackage{amsmath, amsthm} 
\usepackage{graphicx}
\usepackage[top=3cm, bottom=3cm, left=3cm, right=3cm]{geometry}

\usepackage{tikz}
\usetikzlibrary{intersections}
\usetikzlibrary{decorations.markings}
\usetikzlibrary{arrows}
\usetikzlibrary{positioning}

\usepackage{enumitem}
\usepackage{cleveref} 

\usepackage[T1]{fontenc}
\DeclareMathAlphabet{\mathpzc}{OT1}{pzc}{m}{it}
\usepackage{yfonts}
\usepackage[sans]{dsfont}
\usepackage{txfonts}
\usepackage[mathscr]{euscript}
\usepackage{bbm}

\newtheorem{thm}{Theorem}

\theoremstyle{definition}		


					





\newcommand{\ff}[1]{{\mathbb F}_{#1}}		
\newcommand{\ffs}[1]{{\mathbb F}_{#1}^\star}	


















\begin{document}
\title[Observations\dots]{Some observations on bent and planar functions}

\author[R.S. Coulter]{Robert S. Coulter}

\address[R.S. Coulter]{Department of Mathematical Sciences, University of Delaware,
Newark, DE 19716, United States of America.}

\author[S. Senger]{Steven Senger}

\address[S. Senger]{Department of Mathematics, Missouri State University, MO 65897, United States of America.}

\email[R.S. Coulter]{coulter@udel.edu}
\email[S. Senger]{stevensenger@missouristate.edu (corresponding author)}

\thanks{Orcid IDs: (R.S. Coulter) 0000-0002-1546-8779,
(S. Senger) 0000-0003-2912-4464}

\dedicatory{Dedicated to the memory of Pancho (1968--2019)}

\begin{abstract}
We show that the graph of a bent function is a Salem set in an appropriate
sense.
We also establish a simple result that quantifies redundancies in the
difference operators of a function, which applies to bent
functions over fields of odd characteristic via their equivalence to
perfect non-linear functions in that setting.
We end by demonstrating, by entirely elementary means, that the distance between
two distinct planar functions must be at least two.
\end{abstract}

\maketitle


\section{Motivation}

In this note we are interested in three related topics: bent functions, the
inherent redundancy in the difference operators of a function, and the distance
between two functions. We prove a result about each of these.

While much is known about the Fourier properties of the graphs of various types of functions, we prove here that bent functions have graphs whose Fourier behavior is extremal in the sense that they are Salem sets. This result follows directly from the definition of bent functions. 

We next consider difference operators. These functions occur in the study
of perfect non-linear and almost perfect non-linear functions. The former class
of functions includes both planar and bent functions in odd characteristic.
Here, we are interested in the inherent inter-dependence of the difference 
operators. We prove that all difference operators of a function can be
completely determined by a small subset of them, effectively those difference
operators defined by an appropriate basis.

Finally, one major open problem in the study of many classes of functions is related to the minimum distance between two distinct functions within a given class. For example, it is not known if two APN functions can differ in exactly one output. Motivated by this, we use entirely elementary techniques to prove that if two planar functions differ in at least one output, then they must differ in more. A stronger result is known \cite{CK}, but not yet widely available. Our argument merely uses the definition of planarity, and so might give a different perspective for studying the minimum distance of other classes of functions.

\subsection{Definitions and Notation}

Throughout, $q=p^\ell$ for some prime $p$ and $\ell\in\mathbb N$.
We use $\ff{q}$ to denote a finite field of $q$ elements and $\ffs{q}$ its
non-zero elements.
We also let $\vec 0_d$ denote the origin, or all zeros vector, of
$\mathbb F_q^d$.

Let $f\in\ff{q}^d[x]$.
\begin{itemize}
\item For any $a\in\ff{q}^d$ with
$a\ne \vec 0_d$, we define the {\it difference} or {\it difference operator} of
$f$ with respect to $a,$ written $\Delta_{f,a}:\ff{q}^d\rightarrow\ff{q}$, by
$\Delta_{f,a}(x)=f(x+a)-f(x)$.

\item $f$ is called {\it perfect non-linear (PN)}
if and only if for every $a\ne \vec 0_d$, the difference operator
$\Delta_{f,a}$ is equidistributive on all of $\ff{q}.$ That is, every
element of $\ff{q}$ has $q^{d-1}$ pre-images under $\Delta_{f,a}$.
When $d=1$, $f$ is also known as a {\it planar} function.

\item The {\it Fourier transform} of $f$ at $m\in \mathbb F_q^d$ is defined to
be
$$\widehat f(m):=q^{-d}\sum_{x\in \mathbb F_q^d} f(x) \chi(-x \cdot m),$$
where $\chi$ is a nontrivial additive character, and $\cdot$ is the usual inner product. 

\item $f$ is called {\it bent} if and only if for all nontrivial additive
characters $\chi,$ we have that
$$\left | \sum_{x\in\mathbb F_q^d}\chi(f(x)-x\cdot m)\right | = q^\frac{d}{2}.$$
If $p$ is odd, then $f$ is bent if and only if it is PN, see
\cite{A94}, \cite{CM97}. For more general information see \cite{Toka2015}.
\end{itemize}
Following Iosevich and Rudnev in \cite{IR07}, we say that a subset
$E \subseteq \mathbb F_q^d$ is called {\it Salem} if and only if for all $m \in \mathbb F_q^d,$ with $m \neq \vec 0_d,$ we have that there must be some positive constant $C$ independent of $q$ so that
$$\left | \widehat E(m)\right| \leq Cq^{-d}|E|^\frac{1}{2},$$
where $E(x)$ denotes the indicator function of the set $E.$ In this setting, we are typically interested in families of sets that can be defined as $q$ grows large. For more on this definition, and its generalizations, see work by Fraser \cite{Fraser24}.

Given two functions $f,g:A\rightarrow B,$ we say their {\it Hamming distance} or just {\it distance} is the number of images on which they differ. Specifically, we define it as
\[|\{a\in A:f(a)\neq g(a)\}|.\]

\section{Salem sets from graphs}

Salem sets of various kinds have been used to benchmark results in Fourier analysis, geometric measure theory, and other areas, as they exhibit many extreme behaviors. They are an object of study for their own sake, as well as for applications, as can be seen in the aforementioned \cite{Fraser24} and the references contained therein. The important paper by Cheong, Ge, Koh, Pham, Tran, and Zhang \cite{CGKPTZ} is a timely example of how central Salem sets are in geometric combinatorics over finite fields.

By definition, pseudorandom sets are Salem, but deductive constructions are somewhat harder to come by. Moreover, while there are known families of Salem sets, what we prove here is that the graphs of bent functions are not only Salem, but they satisfy the definition with the constant $C =1,$ while many constructions have bigger constants. This follows a theme with other functions widely used in cryptography. The paper by Mihaila and Thornburgh, \cite{MihailaThornburgh}, and the references therein, have results where the graphs of various types of functions have similar Fourier properties.

\begin{thm}
Given a bent polynomial $f\in\mathbb F_q^{d-1}[x],$ its graph $$\left\{(x, f(x)):x\in \mathbb F_q^{d-1}\right\}$$ is a Salem set in $\mathbb F_q^d.$
\end{thm}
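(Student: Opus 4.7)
The plan is to compute $\widehat{E}(m)$ directly from the definition, where $E = \{(x,f(x)) : x \in \mathbb{F}_q^{d-1}\}$, and read off the desired bound from the bent property. Since the indicator function $E(\cdot)$ is supported on the graph, parametrizing by $x \in \mathbb{F}_q^{d-1}$ gives
\[
\widehat{E}(m) = q^{-d}\sum_{x \in \mathbb{F}_q^{d-1}} \chi\bigl(-(x,f(x))\cdot m\bigr).
\]
I would then split $m = (m',m_d) \in \mathbb{F}_q^{d-1} \times \mathbb{F}_q$, so that $(x,f(x))\cdot m = x\cdot m' + m_d f(x)$. Since $|E| = q^{d-1}$, the target inequality with $C = 1$ becomes
\[
|\widehat{E}(m)| \;\leq\; q^{-d} \cdot q^{(d-1)/2} \;=\; q^{-(d+1)/2}
\quad\text{for all } m \neq \vec 0_d.
\]

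The argument then splits into two cases. In the case $m_d = 0$ we must have $m' \neq \vec 0_{d-1}$, and the sum collapses to $q^{-d}\sum_{x} \chi(-x\cdot m')$, which vanishes by additive character orthogonality; this trivially satisfies the bound. In the case $m_d \neq 0$, the map $\chi_{m_d}(y) := \chi(-m_d y)$ is itself a nontrivial additive character of $\mathbb{F}_q$, and rewriting
\[
-x\cdot m' - m_d f(x) \;=\; -m_d\bigl(f(x) - x\cdot a\bigr), \qquad a := -m_d^{-1} m',
\]
recasts the sum as $q^{-d}\sum_{x \in \mathbb{F}_q^{d-1}} \chi_{m_d}(f(x) - x\cdot a)$. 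Applying the bent hypothesis (reformulated on $\mathbb{F}_q^{d-1}$, so the bound is $q^{(d-1)/2}$) to the character $\chi_{m_d}$ and vector $a$ gives $|\widehat{E}(m)| = q^{-(d+1)/2}$, hitting the Salem bound with equality.

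This proof really is just bookkeeping: the substance is packed into the bent hypothesis. The only mild subtlety I expect is notational, namely carefully checking that the bent condition as stated quantifies over all nontrivial additive characters (so that $\chi_{m_d}$ may be used regardless of which $\chi$ was chosen to define $\widehat{E}$) and that dimensions are tracked correctly when passing from the stated definition on $\mathbb{F}_q^d$ to $f \in \mathbb{F}_q^{d-1}[x]$, which is where the exponent $(d-1)/2$ (matching $|E|^{1/2}$) comes from. No genuine obstacle arises; the strength of the conclusion, including $C=1$, is a direct reflection of the fact that the bent condition is itself an equality, not an inequality.
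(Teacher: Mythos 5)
Your proposal is correct and follows essentially the same route as the paper's proof: parametrize the indicator sum over the graph, split on whether $m_d=0$ (orthogonality) or $m_d\neq 0$ (absorb $m_d$ into a new nontrivial additive character and invoke the bent condition on $\mathbb F_q^{d-1}$), arriving at equality with $C=1$. The bookkeeping, including the substitution $a=-m_d^{-1}m'$ and the dimension shift giving $q^{(d-1)/2}=|E|^{1/2}$, matches the paper's argument.
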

\begin{proof}
Suppose $f\in\mathbb F_q^{d-1}[x]$ is bent, and call its graph $E$, so
$$E:= \left\{(x, f(x)):x\in \mathbb F_q^{d-1}\right\}\subseteq\mathbb F_q^d.$$
By definition, $|E|=q^{d-1}$. For convenience, given an $x\in \mathbb F_q^d,$ let $x'$ denote the first $(d-1)$ coordinates,
$$x':= (x_1, x_2, \dots, x_{d-1}).$$
Now, pick an arbitrary $m\in\mathbb F_q^d,$ with $m\neq \vec 0_d,$ and compute the Fourier transform of the indicator function
\begin{align*}
\left| \widehat E(m) \right| &= \left|q^{-d} \sum_{x\in\mathbb F_q^d}E(x)\chi(-x\cdot m)\right|\\
&=q^{-d}\left|\sum_{x'\in \mathbb F_q^{d-1}}\chi(-x_1m_1-x_2m_2-\dots-x_{d-1}m_{d-1}-f(x')m_d)\right|,
\end{align*}
because the terms in the sum where $x_d \neq f(x')$ are not in the set $E$, and therefore contribute zero to the sum. We now split into cases depending on whether $m_d=0$ or not.

{\bf Case 1:} If $m_d=0,$ then $m'\neq \vec 0_{d-1}$ because we know that $m \neq \vec 0_d.$ Continuing as above, we get
\begin{align*}
\left| \widehat E(m) \right| &=q^{-d}\left|\sum_{x'\in \mathbb F_q^{d-1}}\chi(-x_1m_1-x_2m_2-\dots-x_{d-1}m_{d-1})\right|\\
&=q^{-d}\left|\sum_{x'\in \mathbb F_q^{d-1}}\chi(-x'\cdot m')\right|,
\end{align*}
where the sum is zero by orthogonality, because $m' \neq \vec 0_{d-1}.$ Therefore, the Salem bound holds when $m_d=0.$

{\bf Case 2:} If $m_d\neq 0,$ then we can set $m'':= \left(-m_d \right)^{-1}m'$, and compute
\begin{align*}
\left| \widehat E(m) \right| &=q^{-d}\left|\sum_{x'\in \mathbb F_q^{d-1}}\chi\left(\frac{m_d}{m_d}\left(-x_1m_1-x_2m_2-\dots-x_{d-1}m_{d-1}-f(x')m_d\right)\right)\right|\\
&=q^{-d}\left|\sum_{x'\in \mathbb F_q^{d-1}}\chi\left(m_d\left(f(x')+x'\cdot m''\right)\right)\right|.
\end{align*}
Now, this is a sum in our original additive character $\chi,$ but the factor of $m_d$ in the argument of $\chi$ can be thought of as changing this to a sum over a different additive character, $\chi_2$. Because $m_d\neq 0,$ we know that $\chi_2$ will not be a trivial additive character. Finally we set $m'''=(-m''),$ and we see that we can apply the definition of bent functions as follows:
\begin{align*}
\left| \widehat E(m) \right| &=q^{-d}\left|\sum_{x'\in \mathbb F_q^{d-1}}\chi\left(m_d\left(f(x')+x'\cdot m''\right)\right)\right|\\
&=q^{-d}\left|\sum_{x'\in \mathbb F_q^{d-1}}\chi_2\left(f(x')-x'\cdot m'''\right)\right|=q^{-d}q^\frac{d-1}{2}=q^{-d}|E|^\frac{1}{2}.
\end{align*}
\end{proof}


\section{What defines a bent function?}

We next make an observation concerning bent functions in odd characteristic, or
more specifically about the redundancy in difference operators of a function,
which has implications about what defines a bent function.
We may view $\ff{q}^d$ as a $d\ell$-dimensional vector space over $\ff{p}$.
Let $\{g_1,g_2,\ldots,g_{d\ell}\}\subseteq\ff{q}^d$ be an arbitrary but fixed
basis of $\ff{q}^d$ over $\ff{p}$.
There are, of course, many such sets, but one obvious set
is the set $\{\beta_i e_j\}$ where $\{\beta_1,\ldots,\beta_\ell\}$ is basis
for $\ff{q}$ over $\ff{p}$ and the $\{e_j\}$ is the standard set of basis
vectors for $\ff{q}^d$ over $\ff{q}$ with a 1 in the $j$-th coordinate, and
0 in all other positions.
\begin{thm}
Let $a=\sum_{i=1}^{d\ell} k_i g_i$ with $0\le k_i< p$.
For $2\le i\le n$, define $b_i$ by
$b_i=\sum_{j=1}^{i-1} k_j a_j$ and set $b_1=0$.
If $f\in\ff{q}^d[x]$, then 
$$\Delta_{f,a}(x) =
\sum_{i=1}^{d\ell} \sum_{j=1}^{k_i} \Delta_{f,g_i}(x+b_i+jg_i).$$
In particular, every difference operator of $f$ is completely determined
by the $d\ell$ difference operators $\Delta_{f,g_i}$.
\end{thm}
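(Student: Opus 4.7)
The plan is to write $\Delta_{f,a}(x) = f(x+a) - f(x)$ as a telescoping sum along a lattice path in $\ff{q}^d$ from $x$ to $x+a$ whose consecutive edges are single $g_i$'s. Since $a = \sum_{i=1}^{d\ell} k_i g_i$, one natural such path first takes $k_1$ steps of size $g_1$, then $k_2$ steps of size $g_2$, and so on. The running base point after the first $i-1$ blocks of steps is precisely $x+b_i$, so block $i$ traverses the segment from $x+b_i$ to $x+b_i+k_i g_i$. Extending the definition by $b_{d\ell+1}=a$, this gives the outer telescope
\[
f(x+a) - f(x) = \sum_{i=1}^{d\ell} \bigl(f(x+b_{i+1}) - f(x+b_i)\bigr).
\]

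Next I would expand each block by a second telescope of length $k_i$ in the direction $g_i$:
\[
f(x+b_i+k_i g_i) - f(x+b_i) = \sum_{j=0}^{k_i-1} \bigl(f(x+b_i+(j+1)g_i) - f(x+b_i+j g_i)\bigr).
\]
Each summand on the right is exactly one difference operator $\Delta_{f,g_i}$ evaluated at a shifted argument. After a cosmetic reindexing of $j$ to match the range $1 \le j \le k_i$ used in the statement, this yields the claimed identity.

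For the ``in particular'' clause I would simply observe that once $\{g_1,\dots,g_{d\ell}\}$ is fixed as a basis of $\ff{q}^d$ over $\ff{p}$, every $a\in\ff{q}^d$ has a unique representation $\sum k_i g_i$ with $0 \le k_i < p$; the identity above then expresses $\Delta_{f,a}(x)$, at every $x$, as a finite sum of values of the $d\ell$ basis operators $\Delta_{f,g_i}$, so those $d\ell$ operators determine all the others. The only thing demanding any care is the bookkeeping: aligning the partial sums $b_i$ with the truncations of the expansion of $a$ and matching the inner telescope's indexing to the statement. The actual content of the result is the elementary fact that a long finite difference decomposes, along any lattice path between its endpoints, as a sum of ``unit'' finite differences in the chosen basis directions.
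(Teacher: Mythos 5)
Your proof is correct and is essentially the paper's own argument: the paper's repeated application of the identity $\Delta_{f,b+c}(x)=\Delta_{f,c}(x+b)+\Delta_{f,b}(x)$, first across the blocks $k_ig_i$ and then within each block, is exactly your double telescope along the lattice path from $x$ to $x+a$. One caveat: your (correct) inner telescope yields $\sum_{j=0}^{k_i-1}\Delta_{f,g_i}(x+b_i+jg_i)$, and reindexing to $1\le j\le k_i$ gives the argument $x+b_i+(j-1)g_i$ rather than the $x+b_i+jg_i$ appearing in the statement — this is not merely cosmetic, but the discrepancy traces to an off-by-one already present in the paper (its formula $\Delta_{f,kb}=\sum_{i=1}^{k}\Delta_{f,b}(x+ib)$ fails even for $k=1$), so your version is the correct one.
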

\begin{proof}
We first note that for arbitrary $b,c\in\ff{q}^d$ with $b,c\ne \vec 0_d$, we
have
$\Delta_{f,b+c}(x)=f(x+b+c)-f(x)=f(x+b+c)-f(x+b)+f(x+b)-f(x)$.
Thus,
\begin{equation} \label{combine}
\Delta_{f,b+c}(x) = \Delta_{f,c}(x+b) + \Delta_{f,b}(x).
\end{equation}
In particular, for arbitrary non-zero $b\in\ff{q}^d$ and integer
$0\le k < p$, we have
\begin{equation} \label{kbeq}
\Delta_{f,kb} = \sum_{i=1}^k \Delta_{f,b}(x+ib).
\end{equation}

Next, set $c=\sum_{i=1}^n c_i$ with $c_i\in\ff{q}^d$, and for 
$2\le i\le n$, define $b_i$ by
$b_i=\sum_{j=1}^i c_j$ and set $b_1=0$.
Then repeated application of \Cref{combine} also yields
\begin{equation} \label{allbut}
\Delta_{f,c}(x) =
\sum_{i=1}^n \Delta_{f,c_i}(x+b_i).
\end{equation}
Finally, we now turn to $a=\sum_{i=1}^{d\ell} k_i g_i$.
Set $a_i=k_i g_i$ for $1\le i\le d\ell$.
Starting with \Cref{allbut}, we appeal to \Cref{kbeq} to have
\begin{align*}
\Delta_{f,a}(x) &= \sum_{i=1}^n \Delta_{f,a_i}(x+b_i)\\
&= \sum_{i=1}^n \Delta_{f,k_i g_i}(x+b_i)\\
&= \sum_{i=1}^n \sum_{j=1}^{k_i}\Delta_{f,g_i}(x+b_i+ j g_i),
\end{align*}
as claimed.
The final claim is immediate, since
any element of $\ff{q}^d$ can be written in the form of the $a$ in the
statement.
\end{proof}
As we noted in the introduction, this result can be applied specifically
to PN functions, as well as APN and planar functions. Since PN functions and
bent functions are equivalent in odd characteristic, it also applies for
bent functions in that scenario.
The result can be seen as a measure of redundancy in the difference operators
of a function; the $q^{d\ell}-1$ non-trivial difference operators of a 
function $f\in\ff{q}^d[x]$ are completely determined by only $d\ell$ of them.

\section{Minimum distance between distinct planar functions}

Here we discuss the minimum distance between two distinct planar functions. While the first listed author and Kaleyski have results on the minimum distance between members of various function classes, including planar functions, in \cite{CK}, this argument is completely elementary. As such, it could guide other such minimum distance estimates. For example, one big conjecture (see \cite{MihailaThornburgh} and references contained therein) is that distinct APN functions should have a minimum distance strictly greater than 1. We now present an elementary argument showing that the analogous statement does indeed hold for planar functions.

\begin{thm} Suppose $q=p^\ell$ with $p>3$ prime, and $\ell \in \mathbb N.$ Given two distinct planar functions $f,g:\mathbb F_q\rightarrow F_q$, there must be at least two distinct $x_1,x_2\in\mathbb F_q$ so that $f(x_1)\neq g(x_2)$ and $f(x_2)\neq g(x_2).$
\end{thm}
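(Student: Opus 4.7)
The plan is to argue by contradiction. Suppose $f$ and $g$ are distinct planar functions that agree on all of $\mathbb F_q$ except at a single point $x_0$, and set $c = f(x_0) - g(x_0) \neq 0$. After a translation I may take $x_0 = 0$. For any fixed $a \neq 0$, a direct computation shows that the bijections $\Delta_{f,a}$ and $\Delta_{g,a}$ agree outside the pair $\{0, -a\}$, with
\begin{align*}
\Delta_{g,a}(0) &= \Delta_{f,a}(0) + c, \\
\Delta_{g,a}(-a) &= \Delta_{f,a}(-a) - c.
\end{align*}
Because both are bijections, their values on $\{0,-a\}$ must form the same two-element subset of $\mathbb F_q$. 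Since $c \neq 0$, the only viable matching forces $\Delta_{f,a}(-a) - \Delta_{f,a}(0) = c$, which on expansion reads $f(a) + f(-a) = 2f(0) - c$ for every $a \neq 0$.

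Next I would pass to $F(x) := f(x) - f(0)$, which is again planar and now satisfies $F(0) = 0$ together with the symmetry $F(a) + F(-a) = -c$ for every $a \neq 0$. The idea is to exploit this symmetry against the bijectivity of another difference operator. Fix $b \neq 0$. For any $x \notin \{0, -b\}$ the symmetry yields $F(-x) = -c - F(x)$ and $F(-x-b) = -c - F(x+b)$, so
\[
\Delta_{F,b}(-x-b) \;=\; F(-x) - F(-x-b) \;=\; F(x+b) - F(x) \;=\; \Delta_{F,b}(x).
\]
Since $\Delta_{F,b}$ is a bijection, this collision is permitted only at the unique fixed point $x = -b/2$ of the involution $x \mapsto -x-b$ (whose existence requires $p \neq 2$).

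The contradiction will then come from choosing any $x \in \mathbb F_q \setminus \{0, -b, -b/2\}$: the hypothesis $p > 3$ ensures $q \geq 5$ so such an $x$ exists, and for it $-x-b \neq x$ yields a genuine collision of the bijection $\Delta_{F,b}$. I expect the main obstacle in writing this up to be the case analysis that converts the set equality on $\{0,-a\}$ into the symmetry relation, together with the careful bookkeeping of the exceptional points $0, -a, -b, -b/2$ throughout the argument. As a sanity check, the hypothesis on $q$ really is needed: in $\mathbb F_3$ one can verify directly that the planar functions with tables $(f(0),f(1),f(2)) = (0,0,1)$ and $(g(0),g(1),g(2)) = (0,0,2)$ differ at only one point, so the bound is essentially sharp.
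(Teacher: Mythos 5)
Your argument is correct, and its first half coincides with the paper's: both derive, from the fact that the bijections $\Delta_{f,a}$ and $\Delta_{g,a}$ agree off $\{0,-a\}$ and therefore have matching image pairs on $\{0,-a\}$, the key relation $f(a)+f(-a)=\text{const}$ for all $a\neq 0$. Where you genuinely diverge is the endgame. The paper introduces, for each $b\neq 0$, the unique zero $x_b$ of $\Delta_{f,b}$, applies the relation at $a=x_b$ and $a=x_b+b$ to force $x_b=-b/2$, deduces that $f$ is even, and combines evenness with the relation to conclude $f$ is constant on $\mathbb F_q\setminus\{0\}$, so that $\Delta_{f,a}$ has far too many zeros. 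You instead observe that the relation conjugates $\Delta_{F,b}$ into itself under the involution $x\mapsto -x-b$, yielding the collision $\Delta_{F,b}(-x-b)=\Delta_{F,b}(x)$ for every $x\notin\{0,-b\}$, which contradicts bijectivity as soon as some such $x$ avoids the fixed point $-b/2$; this needs only $q\geq 5$. Your route skips the localization of $x_b$ and the ``image set of size two'' conclusion entirely and is correspondingly shorter; it also makes transparent that the hypothesis really being used is $q>3$ with $p$ odd (so the result extends to $q=3^\ell$, $\ell\geq 2$), whereas your $\mathbb F_3$ example --- which is indeed a pair of planar functions at distance one --- shows that $q=3$ itself must be excluded. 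What the paper's longer endgame buys is the observation, made after its proof, that the contradiction there produces a planar function with image set of size $2$, far below the known lower bound of $(q+1)/2$, suggesting room to push the method to larger distances.
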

\begin{proof}
By way of contradiction, suppose there is some $w\in\mathbb F_q$ so that $f(w)\neq g(w),$ but that for all $x\in\mathbb F_q\setminus\{w\},$ we have $f(x)=g(x).$ So for $a\neq 0,$ and $x\neq w-a,$ we will have
\begin{equation}\label{shiftEq}
f(x+a)=g(x+a).
\end{equation}
By the definitions of differences, we will have that
\[\Delta_{f,a}(\mathbb F_q\setminus \{w,w-a\}) = \Delta_{g,a}(\mathbb F_q\setminus \{w,w-a\}).\]
Since both $f$ and $g$ are planar, their difference operators are bijections, so this gives us that
\begin{equation}\label{spittinImages}
\Delta_{f,a}(\{w,w-a\}) = \Delta_{g,a}(\{w,w-a\}).
\end{equation}
Recalling \eqref{shiftEq} and the fact that $a\neq 0,$ we see that $f(w+a) = g(w+a).$ This shows us that $f(w+a)-f(w)\neq q(w+a)-g(w).$ So by \eqref{spittinImages}, we must have that
\[\Delta_{f,a}(w) = \Delta_{g,a}(w-a)\Rightarrow\]
\begin{equation}\label{wawa}
f(w+a)-f(w) = g((w-a)+a)-g(w-a)=g(w)-g(w-a).
\end{equation}
Without loss of generality, let $w=0$ and $f(w)=0.$ This is a safe assumption because if $w\neq 0,$ we could just continue considering horizontal shifts of both $f$ and $g.$ Similarly, if $f(w)\neq 0,$ we could consider vertical shifts of both $f$ and $g.$ Now, since $p\neq 2,$ we can let $g(0)=2c$ for some $c\neq 0.$ Since $f(x)=g(x)$ for all nonzero $x,$ the above combine with \eqref{wawa} to give
 \[f(a)-0=g(0)-g(-a)=2c-f(-a)\Rightarrow\]
 \begin{equation}\label{a-a}
 2c = f(a)+f(-a).
 \end{equation}
For all $b\neq 0,$ we know $\Delta_{f,b}$ is a bijection, so there must be a unique $x_b\in\mathbb F_q$ so that $f(x_b+b)-f(x_b) = 0.$ Now apply \eqref{a-a} with $a=x_b$ to get
\[2c = f(x_b)+f(-x_b).\]
Applying \eqref{a-a} again with $a=(x_b+b)$ gives
\[2c = f(x_b+b)+f(-x_b-b).\]
Using these last two equations, we write
\[f(x_b+b)+f(-x_b-b)=f(x_b)+f(-x_b)\Rightarrow\]
\[f(x_b+b)-f(x_b)=f(-x_b)-f(-x_b-b).\]
Since $f(x_b+b)-f(x_b)=0$ by definition of $x_b,$ we are left with
\[0=f(-x_b)-f(-x_b-b).\]
But this can be written as
\[0=f((-x_b-b)+b) - f(-x_b-b)=\Delta_{f,b}(-x_b-b).\]
Since $x_b$ is the unique zero of $\Delta_{f,b}$ by definition, we must have that
\[x_b = -x_b-b \Rightarrow 2x_b = -b \Rightarrow x_b = \frac{-b}{2}.\]
Plugging this back into the definition of $\Delta_{f,b}(x_b)$ gives us
\[0=\Delta_{f,b}(x_b) = f(x_b+b)-f(x_b) = f\left(\frac{-b}{2}+b\right)-f\left(\frac{-b}{2}\right)=f\left(\frac{b}{2}\right)-f\left(\frac{-b}{2}\right).\]
Since this holds for arbitrary $b\neq 0,$ by recalling that $p\neq 2,$ we have that in general, for all $a=\frac{b}{2}\neq 0,$
\begin{equation}\label{symmetry}
f(a)=f(-a).
\end{equation}
To finish, recall that \eqref{a-a} guaranteed $f(a)+f(-a)=2c$ for all $a\neq 0.$ But by \eqref{symmetry}, we know that $f(a)=f(-a)$ for all $a\neq 0.$ Combining these would imply that $f(a)+f(a)=2c\Rightarrow f(a)=c$ for all nonzero $a\in\mathbb F_q,$ which cannot be the case, as this would ensure that $\Delta_{f,a}$ has $p-2$ zeros for each nonzero $a.$ Since $p>3,$ this would mean $\Delta_{f,a}$ would not have a unique zero, contradicting the definition of the planarity of $f.$
\end{proof}
We note that there is no doubt that this argument could be pushed further.
For one thing, the conclusion of the argument that yields the contradiction is
that we have a planar function with image set of cardinality 2, which is 
significantly below the well-known lower bound of $(q+1)/2$. This suggests 
that there is much scope here.
However, applying the same argument for distance 2 appears to split into
many subcases, and we feel that the elementary argument given here illustrates
the idea perfectly without any unnecessary complications. With the results
of \cite{CK} also in mind, we have therefore refrained from continuing this
line of inquiry here.


\end{document}